\newtheorem{thm}{Theorem}[section]
\newtheorem{lem}[thm]{Lemma}
\newtheorem{prop}[thm]{Proposition}
\theoremstyle{definition}
\theoremstyle{remark}
\newcommand{\be}{\begin{equation}}
\newcommand{\ee}{\end{equation}}
\newcommand{\bea}{\begin{eqnarray}}
\newcommand{\eea}{\end{eqnarray}}
\newcommand{\ben}{\begin{eqnarray*}}
\newcommand{\een}{\end{eqnarray*}}
\newcommand{\bt}{\begin{split}}
\newcommand{\et}{\end{split}}
\newcommand{\bet}{\begin{equation}}
\newcommand{\mc}{\mathbb{C}}
\newcommand{\ra}{\rightarrow}
\begin{document}

\title[Minimum Principle]{A new proof of Kiselman's minimum principle for plurisubharmonic functions}
\date{}
\author[F. Deng]{Fusheng Deng}
\address{Fusheng Deng: \ School of Mathematical Sciences, University of Chinese Academy of Sciences\\ Beijing 100049, P. R. China}
\email{fshdeng@ucas.ac.cn}
\author[Z. Wang]{Zhiwei Wang}
\address{ Zhiwe Wang: \ School
of Mathematical Sciences\\Beijing Normal University\\Beijing\\ 100875\\ P. R. China}
\email{zhiwei@bnu.edu.cn}
\author[L. Zhang]{Liyou Zhang}
\address{ Liyou Zhang: \ School of Mathematical Sciences\\Capital Normal University\\Beijing \\100048\\
P. R.  China}
\email{zhangly@cnu.edu.cn}
\author[X. Zhou]{Xiangyu Zhou}
\address{Xiangyu Zhou:\ Institute of Mathematics, AMSS, and Hua Loo-Keng Key Laboratory of Mathematics, Chinese Academy of Sciences, Beijing 100190,
China; School of Mathematical Sciences, University of Chinese
Academy of Sciences, Beijing 100049, China}
\email{xyzhou@math.ac.cn}

\begin{abstract}
We give a new proof of Kiselman's minimum principle for plurisubharmonic functions,
based on Ohsawa-Takegoshi extension theorem.
\end{abstract}
\maketitle

\section{Introduction}\label{sec:intro}
The aim of this note is to provide a new proof of Kiselman's minimum principle for plurisubharmonic functions,
based on the Ohsawa-Takegoshi extension theorem about holomorphic functions.

\begin{thm}[\cite{Kiselman78}]\label{thm-intr:Kiselman}
Let $\Omega\subset\mc_t^r\times\mc_z^n$ be a pseudoconvex domain and let $p:\Omega\ra U:=p(\Omega)\subset\mc^r$
be the natural projection from $\Omega$ to $\mc^r$.
Let $\varphi$ be a plurisubharmonic function on $\Omega$.
Assume that all fibers $\Omega_t:=p^{-1}(t)$ $(t\in U)$ are connected tube domains
and $\varphi(t,z)$ is independent of the image part of $z$ for all $(t,z)\in \Omega$.
Then the function $\varphi^*$ defined as
$$\varphi^*(t):=\inf_{z\in\Omega_t}\varphi(t,z)$$
is a plurisubharmonic function on $U$.
\end{thm}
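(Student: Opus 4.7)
The strategy is to exhibit $\varphi^*$ as a pointwise limit of plurisubharmonic functions built from fiber $L^2$-integrals, and to deduce plurisubharmonicity of those integrals from the Ohsawa--Takegoshi extension theorem via a Berndtsson-type positivity argument. The prototype is the complex version of the Pr\'ekopa theorem: if $\omega\subset\mathbb{R}^n$ is convex and $\varphi(t,x)$ is plurisubharmonic on $U\times(\omega+i\mathbb{R}^n)$ and depends only on $\operatorname{Re} z$, then $t\mapsto -\log\int_\omega e^{-\varphi(t,x)}\,dx$ is plurisubharmonic on $U$. Kiselman's principle is recovered from this by a Laplace-method limit.

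First, by convolving $\varphi$ in the $(t,\operatorname{Re} z)$-directions only (so as to preserve the independence from $\operatorname{Im} z$), I would reduce to the smooth case on a slightly shrunk pseudoconvex subdomain. To cure the infinite volume of the tube fibers I would truncate to
\[
\Omega^N := \Omega\cap\{|\operatorname{Im} z|<N\},
\]
which is pseudoconvex because $|\operatorname{Im} z|^2$ is plurisubharmonic, and whose fibers $\omega_t+iB_N^n$ have finite volume. Writing $u(t,x):=\varphi(t,x+iy)$ and $c_N:=\operatorname{vol}(B_N^n)$, the quantities
\[
I_k(t):=\int_{\Omega^N_t} e^{-k\varphi(t,z)}\,dV_z = c_N\int_{\omega_t} e^{-k u(t,x)}\,dx
\]
are finite for every $k\in\mathbb{N}$, and I set $\Phi_k(t):=-\tfrac{1}{k}\log I_k(t)$.

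The core of the proof is to show that each $\Phi_k$ is plurisubharmonic on $U$. I would do this by a Berndtsson-type positivity argument: regard the constant $1$ as a holomorphic section of the trivial bundle $U\times\mathbb{C}\to U$; its squared fiber norm with respect to the weight $e^{-k\varphi}$ equals $I_k(t)$, and Ohsawa--Takegoshi extension from a fiber $\Omega^N_{t_0}$ into a neighborhood of $t_0$ in $\Omega^N$, applied to appropriate test sections, gives an $L^2$-inequality that translates directly into the sub-mean-value inequality for $-\log I_k$. Laplace's principle then gives $\Phi_k(t)\to\inf_{x\in\omega_t} u(t,x)=\varphi^*(t)$ pointwise. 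Upper semicontinuity of $\varphi^*$ follows from openness of $\Omega$ and upper semicontinuity of $\varphi$, and local uniform boundedness of the $\Phi_k$ from above combined with Fatou's lemma allows the sub-mean property to pass to the limit, yielding plurisubharmonicity of $\varphi^*$. Finally, letting $N\to\infty$ and undoing the initial regularization completes the argument.

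The main obstacle is the Berndtsson-type positivity step for $-\log I_k$: the Ohsawa--Takegoshi theorem must be applied to a suitable family of test sections with the unbounded weight $k\varphi$, and the resulting $L^2$-extension estimate has to be massaged into precisely the sub-mean inequality for $-\log I_k(t)$. The $k\to\infty$ limit is the other delicate point, since the sequence $\Phi_k$ is not monotone in $k$ (its monotonicity can only be achieved after subtracting $k^{-1}\log\operatorname{vol}(\omega_t)$, which need not be plurisubharmonic); one must therefore take care with upper semicontinuous regularizations, although the structure of the problem---local uniform boundedness together with the a priori USC of $\varphi^*$---makes this ultimately technical rather than conceptual.
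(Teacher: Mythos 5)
The central step of your argument is never proved, and it is precisely the hard part. You reduce everything to the claim that $\Phi_k(t)=-\tfrac1k\log\int_{\Omega^N_t}e^{-k\varphi}\,dV$ is plurisubharmonic in $t$, and assert that Ohsawa--Takegoshi extension ``applied to appropriate test sections'' yields the sub-mean-value inequality for $-\log I_k$ directly. But plurisubharmonicity of $-\log\int e^{-k\varphi_t}$ for a weight depending only on $\operatorname{Re}z$ is exactly Berndtsson's Pr\'ekopa-type theorem, i.e.\ the known route to Kiselman's principle that this paper deliberately avoids; it does not follow in any straightforward way from the extension theorem. What Ohsawa--Takegoshi gives is a one-sided comparison: the minimal norm $\inf\{\int_\Omega|F|^2e^{-k\varphi}:F|_{\Omega_t}\equiv 1\}$ is at most $C\int_{\Omega_t}e^{-k\varphi_t}$. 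That is an inequality between a global extension norm and a single fiber integral; it does not compare $I_k(t_0)$ with the values of $I_k$ on a circle around $t_0$, so no sub-mean-value inequality for $-\log I_k$ comes out ``directly''. You yourself flag this step as the main obstacle, which means the proposal is a strategy with its core missing (and, as written, essentially amounts to assuming Berndtsson's theorem). A secondary issue: truncating to $\{|\operatorname{Im}z|<N\}$ destroys the invariance of the fibers under imaginary translations, so the structural fact that invariant holomorphic functions on the fibers are constant---which any bundle-theoretic argument needs---is no longer available on $\Omega^N$.

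The paper circumvents the obstacle by \emph{not} proving plurisubharmonicity of the fiber integrals at all. After passing to the Reinhardt picture via the exponential map and reducing to a product $U\times D$ with continuous weight, it defines $\varphi_m$ through the minimal extension norm $e^{-m\varphi_m(t)}=\inf\{\int_\Omega|f|^2e^{-m\varphi}: f\in H^2(\Omega,m\varphi)^{I},\ f|_{\Omega_t}\equiv 1\}$; this is the quotient metric, on the line bundle of invariant functions restricted to fibers, induced from a trivial flat Hilbert bundle, so its semipositive curvature---hence plurisubharmonicity of $\varphi_m$---is automatic and requires no Pr\'ekopa-type input. Ohsawa--Takegoshi is then used only for the one-sided bound $e^{-m\varphi_m(t)}\le C\int_{\Omega_t}e^{-m\varphi_t}$, while the sub-mean-value inequality in $t$ applied to the minimal extension gives the opposite bound $e^{-m\varphi_m(t)}\ge \pi r^2\int_D e^{-m(\varphi_t+\epsilon)}$; squeezing and letting $m\to\infty$ (with the upper semicontinuous regularization you also invoke) yields $\varphi_m\to\varphi^*$ and hence plurisubharmonicity of $\varphi^*$. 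Your limiting and upper-semicontinuity considerations are fine, but to complete your route you would have to supply a genuine proof of the positivity of $-\log I_k$ (Berndtsson's theorem, e.g.\ via H\"ormander estimates or positivity of direct images), or else replace the fiber integrals by the extension-norm quantity as the paper does.
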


Let $T^n$ be the $n$-dimensional torus group.
The natural action of $T^n$ on $\mc^n$ is given by
$(e^{i\theta_1},\cdots, e^{i\theta_n})\cdot(z_1,\cdots, z_n):=(e^{i\theta_1}z_1, \cdots, e^{i\theta_n}z_n)$.
A domain $D$ in $\mc^n$ is called a Reinhardt domain if it is invariant under the action of $T^n$.

Taking exponential map, it is obvious that Theorem \ref{thm-intr:Kiselman} is a consequence of the following

\begin{thm}\label{thm-intr:Reinhardt case}
Let $\Omega\subset\mc_t^r\times\mc_z^n$ be a pseudoconvex domain and let $p:\Omega\ra U:=p(\Omega)\subset\mc^r$
be the natural projection from $\Omega$ to $\mc^r$ such that all fibers $\Omega_t:=p^{-1}(t)$ ($t\in U$) are (connected) Reinhardt domains.
Let $\varphi$ be a plurisubharmonic function on $\Omega$ such that $\varphi(t, \alpha z)=\varphi(t,z)$ for $\alpha\in T^n$.
then the function $\varphi^*$ defined as
$$\varphi^*(t):=\inf_{z\in\Omega_t}\varphi(t,z)$$
is a plurisubharmonic function on $U$.
\end{thm}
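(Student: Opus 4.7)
\noindent\textit{Proof plan.} The plan is to realize $\varphi^*$ as the Laplace-asymptotic limit of a family of plurisubharmonically varying fiber integrals, the required variation being Berndtsson's theorem (which is itself a direct consequence of the Ohsawa--Takegoshi extension theorem). The $T^n$-invariance of $\varphi$ and the Reinhardt structure of the fibers make the log-coordinates $z_j = \exp(x_j + i\theta_j)$ natural: $\varphi$ becomes a convex function $\tilde\varphi(t,x)$ on the log-image $\omega_t := \log|\Omega_t^\circ|$, with $\Omega_t^\circ := \Omega_t \cap \{z_1 \cdots z_n \neq 0\}$, and upper semi-continuity of $\varphi$ gives $\varphi^*(t) = \inf_{x \in \omega_t} \tilde\varphi(t, x)$.

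The key trick is to twist: on $\Omega^\circ := \Omega \cap \{z_1 \cdots z_n \neq 0\}$ the weight $\psi_k := k\varphi + 2\sum_j \log|z_j|$ is plurisubharmonic, and the factor $\prod_j |z_j|^{-2}$ in $e^{-\psi_k}$ exactly kills the Jacobian $\prod_j e^{2x_j}$ of Lebesgue measure in the log-coordinates, giving
\[ \int_{\Omega_t^\circ} e^{-\psi_k(t,z)} \, d\lambda(z) \;=\; (2\pi)^n \int_{\omega_t} e^{-k\tilde\varphi(t,x)} \, dx. \]
Exhaust $\Omega^\circ$ by the pseudoconvex Reinhardt subdomains $\Omega^R := \Omega \cap \{1/R < |z_j| < R, \ \forall j\}$ (pseudoconvex as the intersection of $\Omega$ with a polyannulus). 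Berndtsson's plurisubharmonic variation theorem (proved via Ohsawa--Takegoshi) asserts that for pseudoconvex $D \subset \mc_t^r \times \mc_z^n$ and psh $\psi$ on $D$, the function $t \mapsto -\log \int_{D_t} e^{-\psi(t,z)} \, d\lambda(z)$ is psh on $p(D)$. Applied to $\Omega^R$ and $\psi_k$, this shows
\[ f_{k,R}(t) := -\frac{1}{k}\log \int_{\Omega^R_t} e^{-\psi_k(t,z)} \, d\lambda(z) \]
is psh on $p(\Omega^R)$. Since $\omega^R_t \subset [-\log R, \log R]^n$ is bounded, a standard Laplace estimate yields pointwise convergence $f_{k,R}(t) \to \varphi^*_R(t) := \inf_{z \in \Omega^R_t} \varphi(t,z)$ as $k \to \infty$. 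A near-minimizer argument shows $\varphi^*_R$ is upper semi-continuous, so it coincides with $(\limsup_k f_{k,R})^*$ and is therefore psh on $p(\Omega^R)$.

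Sending $R \to \infty$, the $\Omega^R$ exhaust $\Omega^\circ$, and upper semi-continuity of $\varphi$ identifies $\inf_{\Omega_t^\circ} \varphi$ with $\varphi^*(t)$, so $\varphi^*_R \searrow \varphi^*$ on $U$. As the decreasing limit of psh functions is psh, the theorem follows. The principal obstacle is handling the Jacobian from the log-change of variables: without the psh twist $2\sum_j \log|z_j|$, Laplace asymptotics would compute a Legendre-type transform of $\tilde\varphi$ rather than its infimum. Because that twist is psh only away from the coordinate hyperplanes, the argument must be run on annular exhaustions $\Omega^R$, and the full-fiber infimum is recovered only at the end via upper semi-continuity of $\varphi$.
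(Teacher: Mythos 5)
Your argument hinges on a quoted ``Berndtsson plurisubharmonic variation theorem'' that, in the generality you state it, is false. It is not true that for every pseudoconvex $D\subset\mc_t^r\times\mc_z^n$ and every psh $\psi$ on $D$ the function $t\mapsto-\log\int_{D_t}e^{-\psi(t,z)}\,d\lambda(z)$ is psh: take $D=\mc_t\times\mc_z$ and $\psi(t,z)=|z|^2-2\,\mathrm{Re}(zt)=|z-\bar t|^2-|t|^2$, which is plurisubharmonic; then $\int_{\mc}e^{-\psi(t,z)}\,d\lambda(z)=\pi e^{|t|^2}$, whose negative logarithm $-|t|^2-\log\pi$ is not subharmonic (the same example shows the minimum principle itself fails without invariance). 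Berndtsson's actual theorem in \cite{Berndtsson98} carries exactly the hypotheses that do the work here: the fibers must be connected Reinhardt (or tube) domains and the weight must be invariant under the torus action (resp.\ independent of imaginary parts); connectedness of the fibers cannot be dropped either, since $-\log$ of a sum of fiber integrals over separate components behaves like a soft minimum and need not be psh. In your application these hypotheses do in fact hold --- $\psi_k$ is $T^n$-invariant, the fibers of $\Omega^R$ are Reinhardt, and they are connected because the logarithmic image of the pseudoconvex slice $\Omega_t\cap(\mc^*)^n$ is convex, hence so is its intersection with the cube $(-\log R,\log R)^n$ --- but none of this is verified in your write-up, and it must be, precisely because the unqualified statement you invoke is false.

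Once the citation is repaired, your proof is essentially a reduction of the theorem to the Prekopa-type theorem of \cite{Berndtsson98}, used as a black box, followed by scaling the weight by $k$ and Laplace asymptotics; this is in substance Berndtsson's own derivation of Kiselman's principle, and the quoted theorem already contains the result. The paper's goal is the opposite: to avoid the Prekopa-type theorem and argue directly from Ohsawa--Takegoshi (Lemma \ref{lem:OT}). Its route is to extend the constant function $1$ from a fiber with the $L^2$ estimate, obtaining singular quotient metrics $h_m$ on the line bundle $E^m$ of $T^n$-invariant fiberwise $L^2$ holomorphic functions (invariant holomorphic functions on a connected Reinhardt fiber being constants); positivity of the quotient metric of the flat bundle $U\times H^2(\Omega,m\varphi)^I$ makes the weights $\varphi_m$ subharmonic, and the OT estimate together with the sub-mean-value inequality sandwiches $\varphi_m$ so that $\varphi_m\to\varphi^*$, first in the product case (Proposition \ref{prop:product case}) and then in general via the exhaustion weight $\varphi+M\max\{\rho,0\}$, $M\ra+\infty$. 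Your remaining steps --- the Jacobian cancellation by the twist $2\sum_j\log|z_j|$, the identification $\inf_{\Omega_t^\circ}\varphi=\varphi^*(t)$ by upper semicontinuity, the upper semicontinuity of $\varphi^*_R$ via near-minimizers and openness of $\Omega^R$, and the decreasing limit in $R$ --- are sound, modulo the routine locally uniform upper bound on $f_{k,R}$ needed before one may pass to $(\limsup_k f_{k,R})^*$; but as written the proposal both misstates its cornerstone and, even when corrected, proves the statement only by quoting a theorem from which it is already known to follow.
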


The argument in the proof of Theorem \ref{thm-intr:Reinhardt case} can be generalized to
more general settings considered in \cite{Deng-Zhou-Zhang 14, Deng-Zhou-Zhang15}.
For example, in the same way one can show the following

\begin{thm}\label{thm-intr:main}
Let $\Omega\subset\mc_t^r\times\mc_z^n$ be a pseudoconvex domain and let $p:\Omega\ra U:=p(\Omega)\subset\mc^r$
be the natural projection from $\Omega$ to $\mc^r$ such that all fibers $\Omega_t:=p^{-1}(t)$ ($t\in U$) are connected.
Let $\varphi$ be a plurisubharmonic function on $\Omega$.
Assume that there exists a compact Lie group $K$ acting on $\mc^n$ holomorphically such that:
\begin{itemize}
\item[(a)] the action of $K$ on $\mc^n$ preserves the Lebesgue measure;
\item[(b)] all fibers $\Omega_t:=p^{-1}(t)$ ($t\in U$) are invariant under the action of $K$;
\item[(c)] $K$-invariant holomorphic functions on $\Omega_t$ are constant for $t\in U$;
\item[(d)] $\varphi$ is invariant under the action of $K$.
\end{itemize}
then the function $\varphi^*$ defined as
$$\varphi^*(t):=\inf_{z\in\Omega_t}\varphi(t,z)$$
is a plurisubharmonic function on $U$.
\end{thm}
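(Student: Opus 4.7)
The plan is to express $\varphi^{*}$ as the limit of the auxiliary functions
\[
\psi_k(t):=-\frac{1}{k}\log\int_{\Omega_t}e^{-k\varphi(t,z)}\,dV(z),
\]
each of which I will show is plurisubharmonic via a Berndtsson-type positivity argument based on Ohsawa--Takegoshi, applied to the rank-one piece of the $L^{2}$ direct image singled out by the $K$-action.

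I would first reduce to $\varphi$ smooth by $K$-averaged mollification (decreasing approximation) and exhaust $\Omega$ by $K$-invariant relatively compact pseudoconvex subdomains (available by (b) after smoothing and $K$-averaging a defining function). On such an exhaustion the fibers have finite Lebesgue measure, the integral above is finite, and Laplace asymptotics together with (b) give $\psi_k(t)\to\varphi^{*}(t)$ as $k\to\infty$, locally uniformly.

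The core step is plurisubharmonicity of $\psi_k$. Consider the Hilbert bundle $\mathcal{E}$ over $U$ whose fiber at $t$ is $A^{2}(\Omega_t,e^{-k\varphi(t,\cdot)}\,dV)$. Axioms (a), (b), (d) make $K$ act on $\mathcal{E}$ by fiberwise unitary holomorphic transformations $f\mapsto f\circ g^{-1}$, so the averaging operator $P_K f=\int_K(g\cdot f)\,dg$ is a holomorphic orthogonal projection of $\mathcal{E}$ onto a subbundle $\mathcal{E}^{K}$, which is therefore an orthogonal holomorphic direct summand. By (c), $\mathcal{E}^{K}_{t}=\mc\cdot 1$ is one-dimensional, so $\mathcal{E}^{K}$ is a line bundle with Hermitian metric $\|1\|_{t}^{2}=\int_{\Omega_t}e^{-k\varphi(t,\cdot)}\,dV=e^{-k\psi_k(t)}$. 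It remains to show that this line bundle has semipositive curvature, i.e.\ that $-\log\|1\|_{t}^{2}=k\psi_k(t)$ is psh on $U$. This is established by applying the Ohsawa--Takegoshi extension theorem to the inclusion $\Omega_{t_0}\hookrightarrow\Omega$ with a singular psh weight of the form $k\varphi+(r+1)\log|t-t_0|^{2}$ that localizes at an arbitrary $t_0\in U$: the constant function $1$ on $\Omega_{t_0}$ extends to a holomorphic $F$ on $\Omega$ with a sharp $L^{2}$ bound, and $K$-averaging $F$ (legitimate by (a), (d) and unitarity of the $K$-action on $L^{2}$) preserves both the bound and the restriction $F|_{\Omega_{t_0}}=1$ while forcing $F$ into the $K$-invariant subspace. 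The resulting inequality between $\|1\|_{t_0}^{2}$ and the $L^{2}$ norm of $F$ over nearby fibers, taken in the shrinking-disc limit together with the sharp OT constant, yields the sub-mean-value property for $-\log\|1\|_{t}^{2}$ in every complex direction at $t_0$.

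The main obstacle is this final OT step: choosing the localizing weight so that the $L^{2}$ estimate converts cleanly, in the small-disc limit, into a pointwise sub-mean-value inequality rather than only an upper-semicontinuity statement; the sharp constant of Ohsawa--Takegoshi (in the sense of B\l ocki and Guan--Zhou) is exactly what is needed. A secondary technical issue is the passage $k\to\infty$: since $\psi_k$ is not monotone in $k$ in general, one must either verify directly that $\varphi^{*}$ is upper semi-continuous (so that the upper-semicontinuous regularization of $\lim_k\psi_k$ coincides with $\varphi^{*}$ and is psh) or invoke standard $L^{1}_{\mathrm{loc}}$-compactness for the locally uniformly bounded psh family $\{\psi_k\}$ to conclude plurisubharmonicity of $\varphi^{*}$.
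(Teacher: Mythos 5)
Your reduction steps (smoothing, $K$-invariant exhaustion, Laplace asymptotics for $\psi_k\to\varphi^*$, and handling the non-monotone limit via upper semicontinuity) are fine, but the core of the proposal has a genuine gap: the plurisubharmonicity of $\psi_k(t)=-\frac1k\log\int_{\Omega_t}e^{-k\varphi_t}\,dV$ is exactly the hard statement, and your argument for it is only a sketch that you yourself flag as "the main obstacle". Note first that exhibiting $\mathcal{E}^K$ as an orthogonal holomorphic direct summand of the direct-image Hilbert bundle $\mathcal{E}$ buys nothing: curvature \emph{decreases} when passing to a subbundle with the induced metric, so semipositivity of $\mathcal{E}^K$ would have to come from semipositivity of $\mathcal{E}$ itself, i.e. from Berndtsson's theorem on positivity of $L^2$ direct images (whose construction of a holomorphic structure on the varying-fiber bundle $t\mapsto A^2(\Omega_t,e^{-k\varphi_t})$ is itself nontrivial and not supplied here) --- and deducing the minimum principle from that positivity is precisely the already known proof. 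Your alternative via sharp Ohsawa--Takegoshi does not close the gap either: after $K$-averaging, what OT gives at $t_0$ is a holomorphic $g$ on a disc $\Delta(t_0,r)$ with $g(t_0)=1$ and $\int_\Delta|g|^2e^{-u}\le \pi r^2 e^{-u(t_0)}$, where $e^{-u(t)}=\int_{\Omega_t}e^{-k\varphi_t}$; this is a lower bound on a weighted Bergman kernel, $K_{\Delta,u}(t_0)\ge e^{u(t_0)}/(\pi r^2)$, and passing from this family of inequalities (for all $t_0,r$) to the sub-mean-value property of $u$ is not a "shrinking-disc limit" formality but a theorem in its own right --- it is essentially the characterization of plurisubharmonic functions in the cited preprint [DWZZ]. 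So as written, the theorem has been reduced to a statement at least as deep as the ones you would need to quote, with the decisive implication unproved.

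The paper's proof is arranged precisely to avoid this. It never tries to show that the fiber-integral functions $\psi_k$ are psh, and it needs no sharp constant. Instead, for the product (Reinhardt) model it puts on the invariant line bundle the \emph{quotient} metric coming from the fixed Hilbert space $H^2(\Omega,m\varphi)^I$: $e^{-m\varphi_m(t)}$ is the minimal norm $\int_\Omega|f|^2e^{-m\varphi}$ over $T^n$-invariant holomorphic $f$ on the whole of $\Omega$ with $f|_{\Omega_t}\equiv 1$. Quotients of a flat (trivial) Hilbert bundle have semipositive curvature, so $\varphi_m$ is psh for free; Ohsawa--Takegoshi with an \emph{arbitrary} uniform constant $C$ gives $-\varphi_m(t)\le\frac1m\log\int_{\Omega_t}e^{-m\varphi_t}+\frac{\log C}m$, and the plain mean-value inequality on small discs in $t$ gives the matching lower bound, so $\varphi_m\to\varphi^*$ with errors $O(1/m)$ and the conclusion follows; the general case is then reduced to the product case by adding $M\max\{\rho,0\}$ for an invariant defining function $\rho$ and letting $M\to\infty$. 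If you want to salvage your route, you must either invoke Berndtsson's direct-image positivity (defeating the purpose of a new proof) or prove the converse "Bergman-kernel inequality $\Rightarrow$ psh" step in full; switching to the quotient-metric construction is the way to bypass both.
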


Our proof of Theorem \ref{thm-intr:Reinhardt case} is inspired by the method
of Demailly on regularization of plurisubharmonic functions \cite{Dem92}.
Applying Ohsawa-Takegoshi extension theorem to extending holomorphic functions from discrete points,
Demailly shows that a plurisubharmonic function can be approximated by certain Bergman kernels.
In the recent work \cite{DWZZ}, the idea was developed to give a new characterization of plurisubharmonic functions.

It is natural to ask what can we get if we apply Demailly's idea to
extending holomorphic functions from submanifolds of positive dimension.
In this note, we show that this can lead to Kiselman's minimum principle for plurisubharmonic functions,
namely the above theorems and their generalizations.

The above theorems were proved by Berndtsson in \cite{Berndtsson98}(see also \cite{Deng-Zhou-Zhang 14}) by
showing a Prekopa-type theorem for plurisubharmonic functions.
The method in this note is quite different from those in \cite{Kiselman78}\cite{Berndtsson98} and \cite{Deng-Zhou-Zhang 14}.

\subsection*{Acknowledgements}
The authors are partially supported by NSFC grants.

\section{Proof of the main result}\label{sec:proof of main result}
In this section we give the proof of Theorem \ref{thm-intr:main}.
We need the Ohsawa-Takegoshi extension theorem for holomorphic functions.

\begin{lem}[\cite{OT1}]\label{lem:OT}
Let $\Omega\subset\mc_t^r\times\mc_z^n$ be a bounded pseudoconvex domain and let $p:\Omega\ra U:=p(\Omega)\subset\mc^r$
be the natural projection from $\Omega$ to $\mc^r$.
Let $\varphi$ be a plurisubharmonic function on $\Omega$.
Then for any $t\in U$ and for any holomorphic function $f$ on $\Omega_t:=p^{-1}(t)$,
there exists a holomorphic function $F$ on $\Omega$ such that $F|_{\Omega_t}=f$, and
$$\int_\Omega|F|^2e^{-\varphi}\leq C\int_{\Omega_t}|f|^2e^{-\varphi_t},$$
where $\varphi_t(z)=\varphi(t,z)$ and $C$ is a constant independent of $\varphi$, $t$, and $f$.
\end{lem}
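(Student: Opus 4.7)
The plan is to prove this via Hörmander's $L^2$ method with singular weights, following the original Ohsawa-Takegoshi strategy. Given a holomorphic $f$ on the slice $\Omega_{t_0} := p^{-1}(t_0)$, I want to produce a holomorphic extension $F$ on $\Omega$ with controlled $L^2$-norm and a universal constant $C$.

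First I would perform standard reductions. By exhausting $\Omega$ by relatively compact, smoothly bounded, strictly pseudoconvex subdomains $\Omega_j \Subset \Omega$, and by using Demailly's decreasing regularization $\varphi_\nu \searrow \varphi$ by smooth plurisubharmonic functions on a neighborhood of each $\overline{\Omega_j}$, it suffices to prove the estimate under the assumption that $\Omega$ is smooth strictly pseudoconvex and $\varphi$ is smooth up to the boundary; the general case then follows by a weak-$L^2$ compactness plus diagonal argument, with the constant $C$ surviving untouched.

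Next, I would construct a rough extension and cast the problem as a $\bar\partial$-equation. Set $s := t - t_0 \in \mc^r$ and let $\chi$ be a cutoff equal to $1$ near the slice $\{s=0\}$ supported in a slab $\{|s| < 2\epsilon\}$ chosen small enough that $f$, viewed via the product structure $\mc_t^r \times \mc_z^n$ as a function independent of $t$, extends holomorphically to that slab inside $\Omega$. Put $F_0(t,z) := \chi(|s|/\epsilon) f(z)$; then $F_0|_{\Omega_{t_0}}=f$ and $g := \bar\partial F_0$ is a smooth $(0,1)$-form supported in $\epsilon \leq |s| \leq 2\epsilon$. If I can solve $\bar\partial u = g$ with $u|_{\Omega_{t_0}}=0$ in the weighted $L^2$ sense, then $F := F_0 - u$ is the desired holomorphic extension.

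The heart of the proof is to solve this $\bar\partial$-equation with the right weighted estimate. I would introduce the singular weight $\psi_\epsilon(t) := r\log(|s|^2 + \epsilon^2)$, which is plurisubharmonic and develops a pole of order $r$ along $\{s=0\}$ as $\epsilon \to 0$; any $L^2$ solution with respect to $e^{-\varphi-\psi_\epsilon}$ is automatically forced to vanish on the slice. Solvability with a sharp estimate comes from the \emph{twisted} Bochner-Kodaira-Nakano inequality: multiply the standard identity by a carefully chosen auxiliary function $\eta = \eta(|s|)$ (in the Ohsawa-Takegoshi tradition, $\eta$ involves $-\log(|s|^2+\epsilon^2)$ together with a convex correction), so that the negative curvature contribution from $\psi_\epsilon$ is absorbed by $-\partial\bar\partial \eta$ and the residual positivity yields
\be
\int_\Omega |u|^2 e^{-\varphi-\psi_\epsilon} \leq C \int_\Omega |g|^2_\omega\, e^{-\varphi-\psi_\epsilon}
\ee
with $C$ depending only on $r$ and $n$. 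Restricted to the annular support of $g$, where $\psi_\epsilon \approx r\log\epsilon^2$ and where $|\bar\partial\chi(|s|/\epsilon)|^2$ contributes a factor $\sim \epsilon^{-2}$, the right-hand side is essentially $C\,\epsilon^{-2r}\int_{\{\epsilon\leq|s|\leq 2\epsilon\}}|f|^2 e^{-\varphi}\,dV$. Letting $\epsilon \to 0$ and invoking the mean-value property in the $s$-variables together with Fubini and monotone convergence converts this into $C' \int_{\Omega_{t_0}} |f|^2 e^{-\varphi_{t_0}}$, giving the claim.

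The main obstacle is the precise design of the twist $\eta$ together with the singular weight $\psi_\epsilon$: one must engineer them so that the positivity hypothesis in the twisted Bochner-Kodaira inequality holds \emph{uniformly} in $\epsilon$, in $\varphi$, and in the point $t_0$, and so that the geometric constants produced by the limit $\epsilon\to 0$ conspire to an absolute $C$. This balancing act — trading a logarithmic singularity along a codimension-$r$ subvariety against a manufactured positivity correction — is the core technical content; every subsequent simplification of the Ohsawa-Takegoshi theorem (by Siu, Berndtsson, Chen, and others) is essentially a cleverer choice at this step.
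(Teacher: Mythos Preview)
The paper does not give its own proof of this lemma: it is stated with a citation to \cite{OT1} and used as a black box in the proof of Proposition~\ref{prop:product case}. So there is nothing in the paper to compare your argument against line by line.

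That said, your sketch is a faithful outline of the classical Ohsawa--Takegoshi strategy (rough extension, $\bar\partial$-correction, singular weight $r\log|s|^2$ to force vanishing on the slice, twisted Bochner--Kodaira to absorb the resulting negative curvature), and the identification of the ``main obstacle'' is accurate. One technical point to watch: your rough extension $F_0(t,z)=\chi(|s|/\epsilon)f(z)$ treats $f$ as a function of $z$ alone and requires that the slab $\{|s|<2\epsilon\}\cap\Omega$ sit inside a product $\{|s|<2\epsilon\}\times\Omega_{t_0}$; in a general pseudoconvex $\Omega$ this need not hold, so one usually first restricts to a relatively compact subdomain where such a product neighborhood exists, or uses a local extension of $f$ off the fiber before cutting off. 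This is routine but should be said explicitly if you intend a self-contained proof rather than a sketch.
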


We will prove the following Proposition \ref{prop:product case} and show that
Theorem \ref{thm-intr:main} is a consequence of it.

\begin{prop}\label{prop:product case}
Let $D\subset\mc_z^n$ be a bounded pseudoconvex Reinhardt domain and $U\subset \mc_t^r$,
and let $\varphi$ be a plurisubharmonic function on $\Omega:=U\times D$
which is continuous on the closure of $\Omega$.
Assume that $\varphi(t, \alpha z)=\varphi(t,z)$ for $\alpha\in T^n$.
Then the function $\varphi^*$ on $U$ defined by
$$\varphi^*(t):=\inf_{z\in D}\varphi(t,z)$$
is plurisubharmonic.
\end{prop}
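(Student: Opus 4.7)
The plan is to build auxiliary functions on $U$ approximating $\varphi^*$ and verify a sub-mean-value property for them, via the Ohsawa-Takegoshi extension theorem combined with a torus-averaging trick. For each $m\ge 1$ set
\[
I_m(t) := \int_D e^{-m\varphi(t,z)}\, dV(z).
\]
Continuity of $\varphi$ on $\overline{\Omega}$ and boundedness of $D$ give a Laplace-type estimate $-\frac{1}{m}\log I_m(t)\to\varphi^*(t)$ as $m\to\infty$, with a uniform bound on $-\frac{1}{m}\log I_m$ over compact subsets of $U$. The same continuity makes $\varphi^*$ continuous on $U$. Since plurisubharmonicity is tested on complex lines, we may reduce to $U\subset\mc$.

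Fix $t_0\in U$ and a closed disk $\Delta\subset U$ centered at $t_0$. The product $\Omega':=\Delta\times D$ is bounded and pseudoconvex. Applying Lemma \ref{lem:OT} on $\Omega'$ with weight $m\varphi$, fiber $\{t_0\}\times D$, and $f\equiv 1$, we obtain a holomorphic $F$ on $\Omega'$ with $F(t_0,\cdot)\equiv 1$ and
\[
\int_{\Omega'}|F|^{2}e^{-m\varphi}\, dV \le C\, I_m(t_0).
\]

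Now exploit the $T^n$-symmetry. Define
\[
\tilde F(t,z) := \int_{T^n} F(t,\alpha\cdot z)\, d\mu(\alpha),
\]
with $\mu$ the Haar measure on $T^n$. For each $t$, $z\mapsto\tilde F(t,z)$ is holomorphic on $D$ and $T^n$-invariant, so its Laurent expansion on the connected Reinhardt domain $D$ collapses to its constant term; hence $\tilde F(t,z)=\tilde F(t)$ is a holomorphic function of $t$ alone, with $\tilde F(t_0)=1$. Cauchy--Schwarz in $\mu$ followed by the change of variable $z\mapsto\alpha^{-1}z$, which preserves $D$, $\varphi(t,\cdot)$, and Lebesgue measure, gives
\[
|\tilde F(t)|^{2}\, I_m(t) \le \int_D|F(t,z)|^{2}e^{-m\varphi(t,z)}\, dV(z),
\]
and integration over $\Delta$ combined with the OT bound yields
\[
\int_\Delta |\tilde F(t)|^{2}\, I_m(t)\, dA(t) \le C\, I_m(t_0).
\]

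To finish, combine this estimate with the sub-mean-value inequality for the psh function $\log|\tilde F|^{2}$ (which vanishes at $t_0$) and Jensen's inequality (concavity of $\log$):
\[
\frac{1}{|\Delta|}\int_\Delta \log I_m\, dA \;\le\; \frac{1}{|\Delta|}\int_\Delta \log\bigl(|\tilde F|^{2}I_m\bigr)\, dA \;\le\; \log\!\frac{C\, I_m(t_0)}{|\Delta|}.
\]
Dividing by $-m$ and letting $m\to\infty$, where dominated convergence together with the Laplace asymptotics gives $-\frac{1}{m|\Delta|}\int_\Delta\log I_m\, dA \to \frac{1}{|\Delta|}\int_\Delta\varphi^*\, dA$, produces $\varphi^*(t_0)\le \frac{1}{|\Delta|}\int_\Delta\varphi^*\, dA$ on every disk through $t_0$. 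With the continuity of $\varphi^*$ this proves $\varphi^*$ is psh. The main technical hinge is the $T^n$-averaging: forcing $\tilde F$ to depend only on $t$ uses the Reinhardt structure in an essential way, and it is precisely this step that converts the generic $L^2$ extension $F$ into an object comparable to the ``constant section'' $1$. The rest of the argument is a clean interplay between OT, Jensen, and the $m\to\infty$ Laplace asymptotics.
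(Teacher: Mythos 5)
Your argument is correct (up to routine bookkeeping), and while it runs on the same engine as the paper --- an Ohsawa--Takegoshi extension of the constant function $1$ from the fiber $\{t_0\}\times D$ with weight $m\varphi$, the fact that $T^n$-invariant holomorphic functions on the connected Reinhardt fiber are constants, and the Laplace asymptotics $-\frac{1}{m}\log I_m\to\varphi^*$ --- the way you extract subharmonicity is genuinely different. The paper packages the OT estimate into a family of Bergman-type functions $\varphi_m$ defined by the minimal-norm invariant extension of $1$ (invariance of the minimizer being forced by uniqueness), proves each $\varphi_m$ is subharmonic by exhibiting $e^{-m\varphi_m}$ as the quotient metric of the flat infinite-rank trivial bundle $U\times H^2(\Omega,m\varphi)^I$ onto the line bundle with fibers $H^2(D,m\varphi_t)^I$, sandwiches $\varphi_m$ between Laplace integrals (OT from above, the mean-value inequality for $|f|^2$ on a small disk from below), and then passes to the limit through upper semicontinuous envelopes. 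You bypass $\varphi_m$ entirely: you average an arbitrary extension $F$ over $T^n$ to get a holomorphic function $\tilde F(t)$ of $t$ alone with $\tilde F(t_0)=1$, and convert the $L^2$ bound into the area sub-mean-value inequality for $\varphi^*$ directly, via the subharmonicity of $\log|\tilde F|^2$ and Jensen's inequality. Your route is more elementary (no infinite-rank bundle curvature formalism, no usc-envelope limit argument, no lower bound for a Bergman-type quantity), and it generalizes just as readily to the group-action setting of Theorem \ref{thm-intr:main}, since the averaging only uses measure preservation and constancy of invariant holomorphic functions on fibers; what the paper's formulation buys is the explicit Demailly-style approximation of $\varphi^*$ by canonical psh functions $\varphi_m$, which is of independent interest.

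Two small points to tidy up. First, Lemma \ref{lem:OT} should be applied on the open product (open disk times $D$), which is bounded and pseudoconvex; the area sub-mean-value inequality for $\log|\tilde F|^2$ over that open disk still gives $\frac{1}{|\Delta|}\int_\Delta\log|\tilde F|^2\,dA\ge 0$. Second, when you split $\int_\Delta\log(|\tilde F|^2I_m)=\int_\Delta\log|\tilde F|^2+\int_\Delta\log I_m$, justify that both terms are well defined: $\log I_m$ is continuous, hence bounded on $\overline\Delta$, and since $I_m$ is bounded below by a positive constant there, your estimate gives $\int_\Delta|\tilde F|^2\,dA<\infty$, so the positive part of $\log|\tilde F|^2$ is integrable and the manipulation is legitimate.
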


\begin{proof}
Since $\varphi^*$ is upper-semicontinuous,
we can assume that $U$ is a planar domain.

For any positive integer $m$,
let $H^2(\Omega, m\varphi)=\{f\in\mathcal O(\Omega); \|f\|_m:=\int_\Omega|f|^2e^{-m\varphi}<\infty\}$
be the Hilbert space of holomorphic functions on $\Omega$ which are square integrable with respect
to the weight $e^{-m\varphi}$, and let $H^2(\Omega, m\varphi)^I$ be the subspace of $H^2(\Omega, m\varphi)$
consisting of $T^n$-invariant holomorphic functions.
We define $E^m_t=H^2(D,m\varphi_t)^I$ with $t\in U$ in the same way.

Let $\tilde E^m=U\times H^2(\Omega, m\varphi)^I$ and $E^m=\coprod_{t\in U}H^2(D,m\varphi_t)^I$.
Then $\tilde E^m$ is a trivial and flat holomorphic hermitian vector bundle (of infinite rank) over $U$,
and $E^m$ is a trivial holomorphic line bundle over $U$.
Let $\mathbb{I}$ be the canonical holomorphic frame of $E^m$
which restricts to $\Omega_t:=\{t\}\times D$ the constant function with value 1, for all $t\in U$.
Then a holomorphic section of $E$ over $U$ can be naturally identified with a holomorphic function on $U$.

There is a canonical holomorphic bundle morphism $\pi:\tilde E^m\rightarrow E^m$,
with $\pi_t:H^2(\Omega, m\varphi)^I\rightarrow E_t$ given by $f\mapsto f|_{\Omega_t}$.
It is clear that $\pi$ is surjective and hence induces a (singular) hermitian metric on $E^m$,
which is denoted by $h_m$.
Explicitly, $h_m(\mathbb I_t)=\inf\{\|f\|_m; f\in H^2(\Omega, m\varphi)^I, f|_{\Omega_t}\equiv 1\}$.

Let $h^*$ be the metric on $E:=E^1$ given by $h^*(\mathbb{I})=e^{-\varphi^*}$.
Let $h_m(\mathbb{I})=e^{-m\varphi_m}$.
Since $\tilde E^m$ is a flat bundle and $(E^m, h_m)$ is a quotient bundle of $\tilde E^m$,
$(E^m, h_m)$ has semipositive curvature current
and hence $\varphi_m(z)$ is a subharmonic function on $U$ for all $m$.
We want to show that $\varphi_m$ converges to $\varphi^*$ in some sense as $m\rightarrow\infty$.

For fix $t\in U$, let $f\in H^2(\Omega, m\varphi)$ be the function with minimal norm such that
$f|_{\Omega_t}\equiv 1$.
By the uniqueness of the minimal element and the $T^n$-invariance of $\varphi$,
it is clear that $f\in H^2(\Omega, m\varphi)^I$.
By Lemma \ref{lem:OT}, we have the estimate
$$\int_\Omega|f|^2e^{-m\varphi}\leq C\int_{\Omega_t} e^{-m\varphi_z},$$
where $C$ is a constant independent of $m$ and $t$.

By definition,
\begin{equation*}
-\varphi_m(t)=\frac{1}{m}\log\left(\int_\Omega|f|^2e^{-m\varphi}\right).
\end{equation*}
So we have
\begin{equation}\label{eqn:h_m<}
-\varphi_m(t)\leq \frac{1}{m}\log\left(\int_{\Omega_t} e^{-m\varphi_t}\right)+ \frac{\log C}{m}.
\end{equation}

We apply the mean value inequality to prove another inequality.
For any $\epsilon>0$, there is $0<r<<1$ independent of $z$ such that $\varphi(t',z)\leq\varphi(t,z)+\epsilon$
for any $(t',z)\in \Omega$ with $|t-t'|<r$. By mean value inequality, we have
\begin{equation*}
\begin{split}
     & \int_\Omega|f|^2e^{-m\varphi}\\
\geq & \int_{\Delta(t,r)\times D}|f|^2e^{-m\varphi}\\
\geq & \int_D\left(\int_{\Delta(t,r)}|f(\tau,z)|^2d\mu_\tau\right)e^{-m(\varphi_z+\epsilon)}d\mu_z\\
\geq & \pi r^2\int_D e^{-m(\varphi_z+\epsilon)},
\end{split}
\end{equation*}
where $\Delta(t,r)=\{\tau\in\mc; |\tau-t|<r\}$, and $d\mu_\tau$ and $d\mu_z$ are Lebesgue measures on $U$ and $D$ respectively.
This implies
\begin{equation}\label{eqn:h_m>}
-\varphi_m(t)\geq \frac{1}{m}\log\left(\int_D e^{-m(\varphi_t+\epsilon)}\right)+ \frac{\log\pi r^2}{m}.
\end{equation}
Combing \eqref{eqn:h_m<} and \eqref{eqn:h_m>}, we have
\begin{equation}\label{eqn:bi-inequality}
-\frac{1}{m}\log\left(\int_D e^{-m\varphi_t}\right)-\frac{\log C}{m}
\leq\varphi_m\leq
-\frac{1}{m}\log\left(\int_D e^{-m(\varphi_t+\epsilon)}\right)-\frac{\log\pi r^2}{m},
\end{equation}
which implies $\varphi_m$ converges to $\varphi$ pointwise on $\Delta$.

Let $\tilde\varphi_m$ be the upper semicontinuous envelope of $\sup_{j\geq m}\varphi_j$ and let $\tilde\varphi^*$ be the limit of $\tilde\varphi_m$.
Then $\tilde\varphi^*$ is plurisubharmonic and hence it suffices to prove $\varphi^*=\tilde\varphi^*$.
By the first inequality in \eqref{eqn:bi-inequality}, it is obvious that $\tilde\varphi^*\geq \varphi^*$.
Note that $\varphi$ is assumed to be uniformly continuous on the closure of $\Omega$,
the last term in \eqref{eqn:bi-inequality} converges uniformly on $U$ to $\varphi^*+\epsilon$ as $m\ra\infty$.
So we have $\tilde\varphi^*\leq \varphi^*+2\epsilon$ if $m$ is large enough.
Note that $\epsilon$ is arbitrary, $\tilde\varphi^*\leq\varphi^*$ and hence $\varphi^*=\tilde\varphi^*$.
\end{proof}

We now explain how to deduce Theorem \ref{thm-intr:Reinhardt case} from Proposition \ref{prop:product case}.
The argument is inspired by the idea in \cite{Berndtsson98}.
By approximation, we can assume that $\varphi$ is continuous.
By replacing $\Omega$ by a smaller domain, we can assume that $\varphi$ is defined on some neighborhood of $\overline\Omega$,
and there is a smooth plurisubharmonic function $\rho$ on some neighborhood of $\overline\Omega$
such that $\Omega$ is given by $\rho< 0$.
By averaging, $\rho$ can be taken to be invariant under the action of $T^n$.
Since the theorem is local for $U$,
we can contract $U$ and find a pseudoconvex Reinhardt domain $V$ such that $\Omega\subset U\times V$
and $U\times V$ lies in the domain of definition of $\varphi$ and $\rho$.
Applying Proposition \ref{prop:product case} to the domain $U\times V$ and the function $\varphi+M\max\{\rho,0\}$,
and letting $M\ra +\infty$, we get Theorem \ref{thm-intr:Reinhardt case}.

\bibliographystyle{amsplain}

\maketitle

\end{document}